\documentclass{amsart}
\usepackage{graphicx,amsmath,amsfonts,amssymb,amsthm,mathtools,fullpage,url} 
\usepackage[dvipsnames]{xcolor}

\theoremstyle{plain}
\newtheorem{theorem}{Theorem}[section]
\newtheorem{corollary}[theorem]{Corollary}
\newtheorem{lemma}[theorem]{Lemma}
\newtheorem{proposition}[theorem]{Proposition}
\theoremstyle{definition}

\newtheorem*{question*}{Question}

\DeclareMathOperator{\Trace}{trd}
\DeclareMathOperator{\Norm}{nrd}

\def\Spec{\operatorname{Spec}}
\def\End{\operatorname{End}}

\renewcommand{\phi}{\varphi}

\title{Inseparable endomorphisms and rank-2 sublattices of the Gross lattice}

\author[Aubry]{Yves Aubry}
\address[Aubry]{Institut de Math\'ematiques de Toulon - IMATH, Universit\'e de Toulon, France}
\email{yves.aubry@univ-tln.fr}

\address[Aubry]{Institut de Math\'ematiques de Marseille - I2M, Aix Marseille Univ, UMR 7373 CNRS, France}
\email{yves.aubry@univ-amu.fr}

\author[Vincent]{Christelle Vincent}
\address[Vincent]{University of Vermont, Burlington, United States of America}
\email{christelle.vincent@uvm.edu}

\begin{document}

\begin{abstract}
We answer a question posed by Love asking about a correspondence between isogenies from a supersingular elliptic curve to its Frobenius base-change and rank-2 sublattices of its Gross lattice.
We recast the question as one about the inseparable endomorphisms of the curve, and show that the correspondence holds when the trace of the endomorphism is zero, and may not hold otherwise.
\end{abstract}

\subjclass[2020]{11G20, 14H52, 11R52, 14K02} 

\keywords{Elliptic curves, Gross lattices, Isogenies}

\thanks{The authors would like to thank Jonathan Love for fruitful discussions which improved the article. In addition they are grateful for the hospitality of the GAATI laboratory at the Universit\'{e} de Polyn\'{e}sie fran\c{c}aise, where they completed this work.
Aubry is partially supported by the French Agence Nationale de la Recherche through the Barracuda project under Contract ANR-21-CE39-0009-BARRACUDA.
Vincent is supported by a Simons Foundation Travel Support for Mathematicians grant.}

\date{\today}

\maketitle

\section{Introduction}

Let $p$ be a prime, $\mathbb{F}_p$ be the finite field with $p$ elements, and $\overline{\mathbb{F}}_p$ be an algebraic closure of $\mathbb{F}_p$. 
Further, let $E$ be a supersingular elliptic curve defined over $\overline{\mathbb{F}}_p$ and $B_{p,\infty}$ be the quaternion algebra over $\mathbb{Q}$ ramified at $p$ and $\infty$.
Then the geometric endomorphism ring of $E$, $\End(E)$, is isomorphic to a maximal ideal $\mathcal{O}$ of $B_{p,\infty}$. 
For $\varphi(z)=z^p$ the Frobenius homomorphism of $\mathbb{F}_p$, we write
$$E^{(p)}:=E\otimes_{\Spec(\mathbb{F}_p), \varphi}\Spec(\mathbb{F}_p)$$
for the Frobenius base-change of $E$.

Let $\overline{\cdot}$ denote conjugation on $B_{p,\infty}$, then we write $\Trace(x) = x + \overline{x}$ for the reduced trace on $B_{p,\infty}$, and $\Norm(x) = x \overline{x}$ for the reduced norm.
These operations give rise to an inner product $\frac{1}{2}\Trace(x\overline{y})$ defined on $B_{p,\infty}$; we note that $\frac{1}{2}\Trace(x\overline{x})=\Norm(x)$.
For $\Lambda$ a lattice in $B_{p,\infty}$ with basis $\{x_1, \ldots, x_n \}$, we define its \textbf{determinant} to be
\begin{equation*}
\det \Lambda := \det \left( \frac{1}{2}\Trace(x_i\overline{x}_j) \right)_{1 \leq i,j \leq n};
\end{equation*}
this quantity is independent of the basis chosen to compute it.
In addition, for a lattice $\Lambda$ in $B_{p,\infty}$, we define its \textbf{Gross lattice} to be 
\begin{equation*}
\Lambda^T := \{ 2x - \Trace(x) : x \in \Lambda \}.
\end{equation*}
When $\Lambda = \End(E)$ for $E$ a supersingular elliptic curve defined over $\overline{\mathbb{F}}_p$, then we call $\End(E)^T$ the \textbf{Gross lattice of the elliptic curve $E$}.

In \cite{lovetalk}, Love asks the following:

\begin{question*}
Let $\ell$ be a positive integer, $E$ be a supersingular elliptic curve defined over $\overline{\mathbb{F}}_p$, $E^{(p)}$ be its Frobenius base-change, and $\End(E)$ be the geometric endomorphism ring of $E$. Does there exist an isogeny $E \to E^{(p)}$ of degree $\ell$ if and only if $\End(E)^T$ contains a rank-2 sublattice of determinant $4\ell p$?
\end{question*}

In this article, we recast this question as one about the inseparable endomorphisms of the supersingular elliptic curve $E$ (see Section \ref{preliminaries}) and we answer this new question partly in the positive, and partly in the negative. More precisely, our main result is:
\begin{theorem}\label{maintheorem}
Let $p$ be a prime and $E$ be a supersingular elliptic curve defined over $\overline{\mathbb{F}}_p$. Then for any positive integer $\ell$, there exists an endomorphism of $E$ of degree $\ell p$ and trace zero if and only if $\End(E)^T$ contains a rank-2 sublattice of determinant $4\ell p$.
\end{theorem}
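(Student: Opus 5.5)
The plan is to work entirely inside the trace-zero space $B^0=\{x\in B_{p,\infty}:\Trace(x)=0\}$, which contains $\End(E)^T$, and to use the quaternionic cross product. For pure quaternions $x,y$ we have $xy=-\tfrac12\Trace(x\overline{y})+x\times y$, where $x\times y:=\tfrac12(xy-yx)$ is again pure. The Lagrange identity then gives
\begin{equation*}
\Norm\!\left(\tfrac12(xy-yx)\right)=\Norm(x)\Norm(y)-\left(\tfrac12\Trace(x\overline{y})\right)^2=\det\langle x,y\rangle .
\end{equation*}
So the determinant of a rank-2 sublattice $\langle x,y\rangle\subset B^0$ is the reduced norm of the trace-zero element $x\times y$, which is orthogonal to $x$ and $y$. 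Write $\mathcal{O}=\End(E)$, so that a degree-$\ell p$, trace-zero endomorphism is an $\alpha\in\mathcal{O}$ with $\Trace\alpha=0$ and $\Norm\alpha=\ell p$.

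\emph{($\Leftarrow$)} Let $L=\langle x,y\rangle\subset\mathcal{O}^T$ have $\det L=4\ell p$. Write $x=2a-\Trace(a)$ and $y=2b-\Trace(b)$ with $a,b\in\mathcal{O}$. Integers are central, so $xy-yx=4(ab-ba)$ and hence $x\times y=2(ab-ba)$. Put $\alpha=ab-ba\in\mathcal{O}$. A commutator has trace zero, and $\Norm(\alpha)=\tfrac14\Norm(x\times y)=\tfrac14\det L=\ell p$. This direction is therefore formal.

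\emph{($\Rightarrow$)} Let $\alpha\in\mathcal{O}$ with $\Trace\alpha=0$ and $\Norm\alpha=\ell p$, and take $L_0=\mathcal{O}^T\cap\alpha^{\perp}$, a rank-2 lattice. For a rank-3 lattice $\Lambda\subset B^0$, the cross products of bases of its rank-2 saturated sublattices are exactly the vectors $\pm\sqrt{\det\Lambda}\,w$ with $w$ primitive in the dual lattice $\Lambda^{\#}$ (the adjugate of the Gram matrix). Concretely, this means $\det(\Lambda\cap w^\perp)=\det(\Lambda)\Norm(w)$.

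I will use the standard fact that $\det\mathcal{O}^T=4p^2$ for a maximal order of $B_{p,\infty}$. I would verify this on a standard basis, or via the discriminant $p^2$ of $\mathcal{O}$ together with the index relation between $\mathbb{Z}+\mathcal{O}^T$ and $\mathcal{O}$. Granting it, cross products of bases of saturated sublattices of $\mathcal{O}^T$ are the vectors $2p\,w$ with $w\in(\mathcal{O}^T)^{\#}$ primitive.

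The main step is to show that $\alpha/p\in(\mathcal{O}^T)^{\#}$, i.e.\ that $\tfrac12\Trace(\alpha\overline{x})\in p\mathbb{Z}$ for all $x\in\mathcal{O}^T$. Writing $x=2b-\Trace b$ and using $\Trace\alpha=0$, we get $\tfrac12\Trace(\alpha\overline{x})=\Trace(\alpha\overline{b})$. Since $p\mid\Norm\alpha$, the element $\alpha$ lies in the unique two-sided ideal $P\subset\mathcal{O}$ above $p$. This ideal is the inverse different of $\mathcal{O}$ up to the standard identification, so $\Trace(P\mathcal{O})\subseteq p\mathbb{Z}$. I expect this step to be the main obstacle: justifying that $p\mid\Norm\alpha$ forces $\alpha\in P$ and that $\Trace(P)\subseteq p\mathbb{Z}$, which I would do by working in the completion $\mathcal{O}_p$, the maximal order of the division algebra over $\mathbb{Q}_p$.

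Granting this, write $\alpha/p=m w$ with $w$ primitive in $(\mathcal{O}^T)^{\#}$ and $m\ge1$. Then $L_0=\mathcal{O}^T\cap w^\perp$ has determinant
\begin{equation*}
\det L_0=4p^2\Norm(w)=\frac{4p^2\,\ell p}{p^2m^2}=\frac{4\ell p}{m^2}.
\end{equation*}
Taking a basis $\{u,v\}$ of $L_0$ and replacing it by $\{mu,v\}$ gives a rank-2 sublattice of $\mathcal{O}^T$ of determinant exactly $4\ell p$, which completes the proof.
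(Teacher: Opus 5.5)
Your proof is correct. Your ($\Leftarrow$) direction is the paper's argument in different notation: for pure $\gamma_1,\gamma_2$, the paper's element $\frac12\gamma_1\overline{\gamma}_2-\frac14\Trace(\gamma_1\overline{\gamma}_2)$ equals $-\frac14(\gamma_1\gamma_2-\gamma_2\gamma_1)$, which is minus your commutator $ab-ba$.

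The ($\Rightarrow$) direction is where you genuinely differ.

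\emph{The paper's route.} It proves in Proposition~\ref{prop:mainprop} an explicit $\mathbb{Z}$-basis of $[\mathcal{O},\mathcal{O}]=P$. From this it deduces in Corollary~\ref{cor:betabasis} that the trace-zero part of $P$ has basis $\frac14(\beta_i\beta_j-\beta_j\beta_i)$. It then writes the coordinate vector $(a_1,a_2,a_3)$ of $\alpha$ as the $2\times 2$ minors of an integer $2\times 3$ matrix, using $\gcd(a_1,a_2)$. This produces $\gamma_1,\gamma_2$ directly.

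\emph{Relation to your route.} In your language, $\frac14(\beta_i\beta_j-\beta_j\beta_i)=\frac12\beta_i\times\beta_j=\pm p\,\beta^k$, where $\{\beta^k\}$ is the dual basis. So Corollary~\ref{cor:betabasis} says exactly that the trace-zero part of $P$ equals $p(\mathcal{O}^T)^\#$. You only need the inclusion into $p(\mathcal{O}^T)^\#$. Your rescaling to $\{mu,v\}$ plays the role of the paper's gcd step when the coefficient vector is not primitive.

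\emph{What each buys.} Your route is more conceptual: it identifies the relevant plane as $\mathcal{O}^T\cap\alpha^\perp$ and explains the numerology through $\det\mathcal{O}^T=4p^2$. The cost is two imported facts the paper never uses. The paper's route is elementary once $[\mathcal{O},\mathcal{O}]=P$ is known, and it yields the commutator-ideal basis as a by-product.

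\emph{The imported facts are easy.}
\begin{itemize}
\item Your index argument for the determinant goes through. The sum $\mathbb{Z}+\mathcal{O}^T=\mathbb{Z}+2\mathcal{O}$ is orthogonal and has index $8$ in $\mathcal{O}$. Since $\mathcal{O}$ has Gram determinant $p^2/16$, you get $\det\mathcal{O}^T=64\cdot p^2/16=4p^2$.
\item The step you flagged as the main obstacle needs neither $P$ nor completions. The element $\alpha\overline{b}\in\mathcal{O}$ has reduced norm divisible by $p$. If $\alpha\overline{b}\in\mathbb{Z}$, then $p$ divides it, and hence its trace. Otherwise it is an algebraic integer in an imaginary quadratic field contained in $B_{p,\infty}$. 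In such a field $p$ is inert or ramified, so the unique prime above $p$ is Galois-stable. That prime divides both $\alpha\overline{b}$ and its conjugate, so $p\mid\Trace(\alpha\overline{b})$.
\end{itemize}
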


This main result is a generalization of a result implicit in \cite[pages 851-852]{kaneko1989}, which states that if a supersingular elliptic curve has $j$-invariant belonging to $\mathbb{F}_p$ (and hence an endomorphism of degree $p$ and trace zero) then its Gross lattice $\End(E)^T$ contains a sublattice of rank $2$ and determinant $4p$, and of \cite[Proposition 3.1.1]{HKTV} which proves the converse to Kaneko's result.

In addition, in Section \ref{counterexample} we give examples of supersingular elliptic curves over $\overline{\mathbb{F}}_{p}$ which admit an endomorphism of trace $p$ and of degree $\ell p$, but we show that their Gross lattices do not contain a rank-2 sublattice of determinant $4\ell p$. To show that this happens quite often, we give an example with $\ell = p$, one with $\ell \neq p$ where $\ell$ is prime, and finally one with $\ell$ composite.

Finally, since it may be of independent interest, we highlight here Proposition \ref{prop:mainprop}, which gives an explicit $\mathbb{Z}$-basis for the commutator ideal $[\mathcal{O}:\mathcal{O}]$ of an order $\mathcal{O}$ in $B_{p,\infty}$ in terms of a basis of the order $\mathcal{O}$.

\section{Preliminaries}

\subsection{Background} \label{preliminaries}

Recall that throughout $E$ is a supersingular elliptic curve defined over $\overline{\mathbb{F}}_p$.
We begin by recasting Love's question on isogenies $E \to E^{(p)}$ into a question about endomorphisms of $E$.
Let $\ell$ be a positive integer and $\alpha \in \End(E)$ be of degree $\ell p$.
As remarked in Subsection 2.3.5 of \cite{Goren-Love-long}, a supersingular elliptic curve defined over $\overline{\mathbb{F}}_p$ has a unique subgroup scheme of rank $p$, and it is the kernel of the Frobenius isogeny $E \to E^{(p)}$, which is of degree $p$.
Therefore, the endomorphism $\alpha$ factors into the Frobenius isogeny followed by a degree-$\ell$ isogeny $E^{(p)}\to E$. 
The dual of this isogeny then gives rise to an isogeny $E\to E^{(p)}$ of degree $\ell$.
Conversely, if we have an isogeny $E\to E^{(p)}$ of degree $\ell$ for $\ell$ a positive integer, then by composing it with the Verschiebung, which is the dual of the Frobenius isogeny, we obtain an endomorphism of $E$ of degree $\ell p$.
By this argument, it is clear that finding an isogeny $E\to E^{(p)}$ of degree $\ell$ is equivalent to finding an endomorphism of $E$ of degree $\ell p$. 
We also find, since the Verschiebung isogeny is inseparable when $E$ is supersingular, that the inseparable endomorphisms of a supersingular elliptic curve are exactly those of degree divisible by $p$.
 
Let $\mathcal{O}$ be a maximal order in $B_{p,\infty}$; this will be isomorphic to the geometric endomorphism ring $\End(E)$ of a supersingular elliptic curve $E$ defined over $\overline{\mathbb{F}}_p$.
We note that under such an isomorphism, the degree of an endomorphism corresponds to the reduced norm of its image, and the trace on $\End(E)$ agrees with the reduced trace on $\mathcal{O}$.
The ideal of $\mathcal{O}$ containing the elements of reduced norm divisible by $p$ is a two-sided prime ideal of $\mathcal{O}$, which we denote $P$.
This is in fact the unique maximal two-sided ideal of $\mathcal{O}$ of reduced norm $p$.
Furthermore, let $[\mathcal{O},\mathcal{O}]$ be the two-sided \textbf{commutator ideal} of $\mathcal{O}$, by which we mean the two-sided ideal of $\mathcal{O}$ generated by elements of the form $[\alpha_1,\alpha_2] := \alpha_1\alpha_2 - \alpha_2 \alpha_1$ for $\alpha_i \in \mathcal{O}$.
It is well-known (see for example \cite[Section 42.4.6]{JVbook}) that $[\mathcal{O},\mathcal{O}]$ is equal to $P$ for any maximal order of $B_{p,\infty}$.
Finally, $P = [\mathcal{O},\mathcal{O}]$ is free of rank $4$, since it is a submodule of the free $\mathbb{Z}$-module $\mathcal{O}$ and of finite index $p^2$.
This last fact is the case because $\mathcal{O}/P \cong \mathbb{F}_{p^2}$, again as argued in \cite[Section 42.4.6]{JVbook}.

Define the linear map $\tau \colon B_{p,\infty} \to B_{p,\infty}$ by $\tau(x) = 2x - \Trace(x)$; then the image of a lattice $\Lambda$ under $\tau$ is exactly its Gross lattice $\Lambda^T$. As remarked in \cite[Section 3.3]{Goren-Love-short}, if we restrict $\tau$ to an order of $B_{p,\infty}$, then $\ker(\tau) = \mathbb{Z}$. In our proof we will need the following result:
\begin{lemma}\label{petitlemme}
Let $\mathcal{O}$ be an order in $B_{p,\infty}$. If $\{1,\alpha_1,\alpha_2,\alpha_3\}$ is a basis for $\mathcal{O}$, then $\{\tau(\alpha_1), \tau(\alpha_2), \tau(\alpha_3)\}$ is a basis for $\mathcal{O}^T$. Conversely, if $\{\beta_1,\beta_2,\beta_3\}$ is a basis for $\mathcal{O}^T$ then $\{1, \alpha_1,\alpha_2,\alpha_3\}$ is a basis for $\mathcal{O}$ for any $\alpha_i$ such that $\alpha_i= \tau(\beta_i)$ for $i = 1,2,3$.
\end{lemma}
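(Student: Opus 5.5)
The plan is to treat $\tau$ restricted to $\mathcal{O}$ as a $\mathbb{Z}$-linear surjection $\mathcal{O} \to \mathcal{O}^T$ whose kernel is $\mathbb{Z}$, and to read off both directions from the short exact sequence $0 \to \mathbb{Z} \to \mathcal{O} \to \mathcal{O}^T \to 0$. I read the converse as intended to mean $\alpha_i \in \mathcal{O}$ with $\tau(\alpha_i) = \beta_i$, since $\tau(\beta_i)$ need not lie in $\mathcal{O}$.

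First I would justify the kernel claim quoted from \cite{Goren-Love-short}. If $\tau(x) = 0$ then $2x = \Trace(x) \in \mathbb{Q}$, so the kernel of $\tau$ on $B_{p,\infty}$ is $\mathbb{Q}$. Elements of an order are integral, so $\mathbb{Q} \cap \mathcal{O} = \mathbb{Z}$, and hence $\ker(\tau|_{\mathcal{O}}) = \mathbb{Z}$. Linearity of $\tau$ is immediate from linearity of $\Trace$, and $\tau(\mathcal{O}) = \mathcal{O}^T$ by definition.

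For the forward direction, $\mathcal{O}^T$ is spanned by $\tau(1) = 0, \tau(\alpha_1), \tau(\alpha_2), \tau(\alpha_3)$, so the three $\tau(\alpha_i)$ span it. For independence, suppose $\sum c_i \tau(\alpha_i) = 0$ with $c_i \in \mathbb{Z}$. Then $\sum c_i \alpha_i \in \ker(\tau|_{\mathcal{O}}) = \mathbb{Z}$, say it equals $m \cdot 1$. This is a nontrivial relation among $1, \alpha_1, \alpha_2, \alpha_3$ unless every $c_i$ (and $m$) is zero.

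For the converse, take $x \in \mathcal{O}$ and write $\tau(x) = \sum c_i \beta_i = \tau\bigl(\sum c_i \alpha_i\bigr)$. Then $x - \sum c_i \alpha_i \in \mathbb{Z}$, so $\{1, \alpha_1, \alpha_2, \alpha_3\}$ spans $\mathcal{O}$. For independence, suppose $m + \sum c_i \alpha_i = 0$. Applying $\tau$ gives $\sum c_i \beta_i = 0$, so all $c_i = 0$, and then $m = 0$.

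The only point needing any care is the identification $\mathbb{Q} \cap \mathcal{O} = \mathbb{Z}$, which uses integrality of orders, together with the interpretation of the converse hypothesis. Neither is a real obstacle.
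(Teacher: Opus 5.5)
Your proposal is correct and follows essentially the same argument as the paper: in both directions you get spanning from $\ker(\tau|_{\mathcal{O}}) = \mathbb{Z}$ and independence by applying $\tau$ or pulling back a relation through it. Your reading of the converse hypothesis as $\tau(\alpha_i) = \beta_i$ is exactly what the paper's proof uses, and your justification of the kernel via $\mathbb{Q} \cap \mathcal{O} = \mathbb{Z}$ fills in the step the paper cites from \cite{Goren-Love-short}.
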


\begin{proof}
Begin by supposing that $\{1,\alpha_1,\alpha_2,\alpha_3\}$ is a basis for $\mathcal{O}$, and let $\beta \in \mathcal{O}^T$. Then $\beta = \tau(\alpha)$ for some $\alpha \in \mathcal{O}$, and there are integers $a_0, a_1,a_2,a_3$ such that $\alpha  = a_0 + a_1\alpha_1+ a_2 \alpha_2 + a_3 \alpha_3$. Hence since $\tau$ is $\mathbb{Z}$-linear and $\tau(a_0) = 0$, we have that $\beta = a_1 \tau(\alpha_1) + a_2 \tau(\alpha_2) + a_3 \tau(\alpha_3)$, and the set $\{\tau(\alpha_1), \tau(\alpha_2), \tau(\alpha_3)\}$ generates $\mathcal{O}^T$. In addition if $a_1 \tau(\alpha_1) + a_2 \tau(\alpha_2) + a_3 \tau(\alpha_3) = 0$ for some integers $a_1,a_2,a_3$, then $a_1\alpha_1+ a_2 \alpha_2 + a_3 \alpha_3 \in \ker(\tau)=\mathbb{Z}$. If we denote by $a_0$ this integer, then $a_0 -a_1\alpha_1- a_2 \alpha_2 - a_3 \alpha_3 =0$. Since the set $\{1,\alpha_1,\alpha_2,\alpha_3\}$ is linearly independent, we conclude that $a_0 = a_1 = a_2 = a_3 = 0 $ and therefore the set $\{\tau(\alpha_1), \tau(\alpha_2), \tau(\alpha_3)\}$ is linearly independent.

Now let $\{\beta_1,\beta_2,\beta_3\}$ be a basis for $\mathcal{O}^T$, $\alpha_i \in \mathcal{O}$ such that $\tau(\alpha_i) = \beta_i$ for each $i$, and $\alpha \in \mathcal{O}$. We have that there are integers $a_1, a_2, a_3$ such that $\tau(\alpha) = a_1 \beta_1 + a_2 \beta_2 + a_3 \beta_3$ since $\tau(\alpha) \in \mathcal{O}^T$. Because $\tau(\alpha_i) = \beta_i$ and $\tau$ is $\mathbb{Z}$-linear, we thus have $\tau(\alpha) = \tau(a_1\alpha_1 + a_2 \alpha_2 + a_3 \alpha_3)$ and therefore $\alpha - a_1\alpha_1 - a_2 \alpha_2 - a_3 \alpha_3 \in \ker(\tau) = \mathbb{Z}$. Again denoting by $a_0$ this integer, we conclude that $\alpha = a_0 +a_1\alpha_1 + a_2 \alpha_2 + a_3 \alpha_3$ and the set $\{1,\alpha_1,\alpha_2,\alpha_3\}$ generates $\mathcal{O}$. In addition if $a_0 +a_1\alpha_1 + a_2 \alpha_2 + a_3 \alpha_3 = 0$, applying $\tau$ to this equation we obtain that $a_1 \beta_1 + a_2 \beta_2 + a_3 \beta_3 = 0$. Since the set $\{\beta_1,\beta_2,\beta_3\}$ is linearly independent, we have $a_1 = a_2 = a_3 = 0$. From this we deduce that $a_0 = 0$ as well, and therefore the set $\{1,\alpha_1,\alpha_2,\alpha_3\}$ is linearly independent.
\end{proof}

\subsection{Existence of an inseparable endomorphism}

We begin by proving one direction of our main result, Theorem \ref{maintheorem}. We note that a similar argument is given in the proof of Lemma 3.4 of \cite{Chevyrev-Galbraith}.

\begin{proposition}\label{half-love}
Let $E$ be a supersingular elliptic curve defined over $\overline{\mathbb{F}}_p$ and $\End(E)$ be its geometric endomorphism ring. If there exists a sublattice of rank 2 of $\End(E)^T$ of determinant $4\ell p$ then there exists an inseparable endomorphism of $E$ of degree $\ell p$ and trace zero.
\end{proposition}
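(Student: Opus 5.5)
Let $L\subseteq \End(E)^T$ be a rank-2 sublattice with basis $\{x,y\}$ and $\det L = \Norm(x)\Norm(y) - \left(\tfrac12\Trace(x\overline{y})\right)^2 = 4\ell p$. The plan is to show that the commutator-type element $xy - yx$, suitably divided, is the desired endomorphism. Every element of $\End(E)^T$ has trace zero, so $\overline{x}=-x$ and $\overline{y}=-y$. Then $x^2 = -\Norm(x)$, $y^2=-\Norm(y)$, and $xy+yx = -\Trace(x\overline{y})\cdot(-1)$ up to sign; precisely $\Trace(xy) = -\Trace(x\overline{y})$. Write $s=\tfrac12\Trace(x\overline y)$ and $\gamma = xy - yx$. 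A direct computation gives $\overline{\gamma} = \overline{y}\,\overline{x}-\overline{x}\,\overline{y} = yx - xy = -\gamma$, so $\Trace(\gamma)=0$. It also gives $\Norm(\gamma) = -\gamma^2 = 4(\Norm(x)\Norm(y)-s^2) = 4\det L = 16\ell p$. For the last step one uses $xy = -s + \tfrac12\gamma$ (since $xy+yx = -2s$) together with $\Norm(xy)=\Norm(x)\Norm(y)$, which yields $\Norm(\gamma)/4 = \Norm(x)\Norm(y)-s^2$.

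So $\gamma$ has trace zero and norm $16\ell p$, and the candidate endomorphism is $\gamma/4$. The main obstacle is showing that $\gamma/4 \in \End(E)$, i.e.\ that $\gamma\in 4\End(E)$. Here I use that $x=\tau(a)=2a-\Trace(a)$ and $y=\tau(b)=2b-\Trace(b)$ for some $a,b\in\End(E)$. Since integers are central, $xy-yx = [2a,2b] = 4(ab-ba)$. Hence $\gamma/4 = ab-ba\in\End(E)$ is a genuine endomorphism, with trace zero and reduced norm $\ell p$.

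Finally, since the degree of an endomorphism is the reduced norm of the corresponding element and traces agree (as recalled in the Preliminaries), $\alpha:=ab-ba$ is an endomorphism of $E$ of degree $\ell p$ and trace zero. Its degree is divisible by $p$, so by the discussion in Subsection~\ref{preliminaries} it is inseparable. Lemma~\ref{petitlemme} is not strictly needed here; only the surjectivity of $\tau$ onto $\End(E)^T$ is used. I expect the only delicate point to be keeping the sign conventions straight in the norm computation, and in particular checking that the determinant with respect to the inner product $\tfrac12\Trace(x\overline y)$ is exactly $\Norm(x)\Norm(y)-s^2$, so that the factor $4$ in $4\ell p$ comes out precisely.
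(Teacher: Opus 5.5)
Your proof is correct and is essentially the paper's own argument: since $\overline{\gamma}_2=-\gamma_2$, the paper's element $\frac12\gamma_1\overline{\gamma}_2-\frac14\Trace(\gamma_1\overline{\gamma}_2)$ equals $-\frac14(\gamma_1\gamma_2-\gamma_2\gamma_1)$, which is your $\frac14(xy-yx)$ up to sign, and the norm and trace checks are the same. Recognizing this element as the commutator $[a,b]$ of $\tau$-preimages is a tidier way to get integrality than the paper's explicit expansion, and it is the identity the paper itself uses later in Corollary~\ref{cor:betabasis}.
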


\begin{proof}
Let $\Lambda$ be a sublattice of rank 2 of $\End(E)^T$ of determinant $4\ell p$, and $\{\gamma_1, \gamma_2\}$ be any basis of $\Lambda$. Consider the element
 \begin{equation*}
 \alpha:=\frac{1}{2}\gamma_1\overline{\gamma}_2-\frac{1}{4} \Trace (\gamma_1\overline{\gamma}_2) \in B_{p,\infty}.
 \end{equation*}

We show, as in the proof of Proposition 3.1 of \cite{HKTV}, that $\alpha$ belongs to $\End(E)$ and has degree $\ell p$ and trace zero. Indeed, consider any elements $\alpha_1$ and $\alpha_2$ in $\End(E)$ such that $\gamma_1=\tau(\alpha_1)$
and $\gamma_2=\tau(\alpha_2)$. Then we have
$$\alpha=2\alpha_1\overline{\alpha}_2 -(\Trace(\alpha_2)\alpha_1+\Trace(\alpha_1)\overline{\alpha}_2)
+\Trace(\alpha_1)\Trace(\alpha_2)
-\Trace(\alpha_1\overline{\alpha}_2)$$
which implies that $\alpha\in{\mathcal O}$.

Moreover, we have:
$$\Norm(\alpha)
= \frac{1}{4} \left(\Norm(\gamma_1)\Norm(\gamma_2)-\frac{1}{4}\Trace(\gamma_1\overline{\gamma}_2)^2\right)
=\frac{1}{4}\det \Lambda=\ell p,$$

as well as 
$$\Trace(\alpha)=\frac{1}{2}\gamma_1\overline{\gamma}_2-\frac{1}{4} \Trace(\gamma_1\overline{\gamma}_2)
+ \frac{1}{2}\gamma_2\overline{\gamma}_1-\frac{1}{4} \Trace(\gamma_1\overline{\gamma}_2)
= \frac{1}{2} \Trace(\gamma_1\overline{\gamma}_2)-\frac{1}{2} \Trace(\gamma_1\overline{\gamma}_2)=0.$$

Finally we can conclude that $\alpha$ is inseparable since its degree is divisible by $p$ and $E$ is supersingular.
\end{proof}

\section{Existence of a sublattice of rank $2$}

We now turn our attention to the converse of Proposition \ref{half-love}, which specifically asks to, given an endomorphism $\alpha$ of a supersingular curve $E$ of degree $\ell p$ and trace $0$, exhibit a rank-2 sublattice of $\End(E)^T$ of determinant $4\ell p$. We do this by proving that there are $\gamma_1$ and $\gamma_2$ linearly independent in $\End(E)^T$ such that 
\begin{equation*}
\alpha=\frac{1}{2}\gamma_1\overline{\gamma}_2-\frac{1}{4} \Trace(\gamma_1\overline{\gamma}_2).
\end{equation*}

Indeed, if this is the case, by the proof of Proposition \ref{half-love}, we obtain that if $\Lambda$ is the lattice generated by the set $\{ \gamma_1,\gamma_2\}$, then $\det \Lambda = 4\Norm(\alpha)$, and hence the determinant of the rank-2 sublattice of $\End(E)^T$ spanned by $\gamma_1$ and $\gamma_2$ is $4\ell p$ (which also proves that $\gamma_1$ and $\gamma_2$ are linearly independent since $\ell \neq 0$).

To do this, we begin by exhibiting an explicit $\mathbb{Z}$-basis for $[\mathcal{O},\mathcal{O}]$.

\begin{proposition}\label{prop:mainprop}
Let $\mathcal{O}$ be a maximal order of $B_{p,\infty}$, $[\mathcal{O},\mathcal{O}]$ its commutator ideal, and $\{1, \alpha_1,\alpha_2,\alpha_3\}$ a basis for $\mathcal{O}$. Then the set 
\begin{equation*}
\left\{[\alpha_1,\alpha_2],[\alpha_1,\alpha_3], [\alpha_2,\alpha_3], \alpha_1 [\alpha_2,\alpha_3]\right\}
\end{equation*}
is a $\mathbb{Z}$-basis of $[\mathcal{O},\mathcal{O}]$.
\end{proposition}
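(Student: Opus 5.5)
We argue by index comparison. Let $L$ be the $\mathbb{Z}$-span of the four listed elements. All four lie in $[\mathcal{O},\mathcal{O}]$: the first three are commutators, and the fourth is an element of $\mathcal{O}$ times a commutator, which lies in the two-sided ideal. So $L \subseteq [\mathcal{O},\mathcal{O}] = P$.

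Since $[\mathcal{O}:P] = p^2$, it suffices to show that the four elements are linearly independent and that $L$ has index exactly $p^2$ in $\mathcal{O}$. Then $L \subseteq P$ with equal index forces $L = P$, and the four elements form a $\mathbb{Z}$-basis. To compute the index we compare discriminants. For a lattice $\Lambda$ of rank $4$ in $B_{p,\infty}$ we use the reduced discriminant $d(\Lambda)=\det(\Trace(x_i x_j))$. If $\Lambda' \subseteq \Lambda$, then $d(\Lambda') = [\Lambda:\Lambda']^2 d(\Lambda)$. For a maximal order, $d(\mathcal{O}) = -p^2$ up to the usual sign convention (equivalently, $\operatorname{disc}(\mathcal{O})=p$). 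So the claim reduces to the identity
\[
d\bigl([\alpha_1,\alpha_2],[\alpha_1,\alpha_3],[\alpha_2,\alpha_3],\alpha_1[\alpha_2,\alpha_3]\bigr) = p^4\, d(1,\alpha_1,\alpha_2,\alpha_3),
\]
which gives index $p^2$. A nonzero discriminant also yields linear independence.

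To prove the identity we would use a standard fact. For $x,y,z \in B_{p,\infty}$, the trilinear form $m(x,y,z)=\Trace(x[y,z])$ is alternating, and it vanishes when one argument is in $\mathbb{Q}$. It satisfies
\[
m(\alpha_1,\alpha_2,\alpha_3)^2 = -\operatorname{disc}\text{-type quantity},
\]
namely $m(\alpha_1,\alpha_2,\alpha_3)^2 = -d(1,\alpha_1,\alpha_2,\alpha_3)$ (cf. \cite[Chapter 15]{JVbook}). For a maximal order this gives $m(\alpha_1,\alpha_2,\alpha_3) = \pm p$.

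Next we write $c_{ij} = [\alpha_i,\alpha_j]$, which have trace zero, and compute the Gram matrix of traces $\Trace(xy)$ on our four elements. For pure quaternions $u,v$ we have $\Trace(uv) = -2\langle u,v\rangle$, and the span of $c_{12},c_{13},c_{23}$ behaves like the "cross-product" lattice of the pure parts of $\alpha_1,\alpha_2,\alpha_3$. The Gram determinant of the cross products should then be the square of the Gram determinant of the pure parts $\tau(\alpha_i)$, up to powers of $2$. By Lemma \ref{petitlemme}, that Gram determinant is controlled by $\mathcal{O}^T$. The fourth element $\alpha_1 c_{23}$ contributes a scalar part $\tfrac12\Trace(\alpha_1c_{23}) = \tfrac12 m(\alpha_1,\alpha_2,\alpha_3) = \pm p/2$ plus a pure part lying in the span of the others.

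Rather than carry this out in general coordinates, a cleaner route is the following. Work modulo the span of the three commutators, and observe that $\alpha_1 c_{23}$ and $\tfrac12 m$ differ by an element of $\mathbb{Q}c_{12}+\mathbb{Q}c_{13}+\mathbb{Q}c_{23}$. This splits the computation into the volume of the trace-zero lattice spanned by the $c_{ij}$, times the contribution of the scalar direction. Both factors are expressible through $m(\alpha_1,\alpha_2,\alpha_3)=\pm p$ and $d(\mathcal{O})$.

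The main obstacle is this determinant bookkeeping: tracking the factors of $2$ and signs so that the index comes out exactly $p^2$. The factor $2$ arises because $c_{ij}$ equals $2\times$ (the cross product of the pure parts), up to rational scalars, while the scalar part of $\alpha_1c_{23}$ carries a $\tfrac12$. If the general computation becomes messy, a fallback is a direct unimodular-change argument. Since the statement is invariant under replacing $\alpha_i$ by $\alpha_i + n_i$ and by $\mathrm{GL}_3(\mathbb{Z})$ changes of $\{\alpha_1,\alpha_2,\alpha_3\}$, we would check that $L$ transforms into itself under such changes (possibly modulo $P$). We would then verify the claim for one explicit maximal order basis (e.g.\ the standard bases for $p=2$, $p\equiv 3 \bmod 4$, and general $p$ via Pizer's orders), using the fact that all maximal orders are locally conjugate.
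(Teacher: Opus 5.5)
Your reduction is correct: $L\subseteq[\mathcal{O},\mathcal{O}]=P$, $[\mathcal{O}:P]=p^2$, and $d(L)=[\mathcal{O}:L]^2\,d(\mathcal{O})$, so everything comes down to the identity $d(L)=p^4\,d(\mathcal{O})$. But that identity is the whole content of the statement, and the proposal never proves it. You say the Gram determinant of the $c_{ij}$ ``should'' be the square of that of the $\tau(\alpha_i)$ ``up to powers of $2$''. This silently uses two facts you neither state nor derive: the Binet--Cauchy identity for the quaternion cross product $u\times v=\tfrac12[u,v]$ on pure quaternions, and the value $\det\mathcal{O}^T=4p^2$. The powers of $2$ you yourself call the main obstacle are exactly what decides whether the index is $p^2$. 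The fallback does not fill the hole either. Invariance of $L$ ``modulo $P$'' is vacuous, since $L\subseteq P$ already. Genuine invariance of $L$ under $\mathrm{GL}_3(\mathbb{Z})$ needs commutator identities you do not supply. The order-by-order check over Pizer's orders and the local-conjugacy step are not carried out. As written, this is a plan rather than a proof.

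The gap closes in a few lines with the form $m$ you already introduced, if you pair instead of computing Gram determinants. Put $\mu=\Trace(\alpha_1[\alpha_2,\alpha_3])$, and take $x=(1,\alpha_1,\alpha_2,\alpha_3)$ and $y=(\alpha_1c_{23},c_{23},c_{31},c_{12})$. Since $m$ is alternating and vanishes when an argument is a scalar, the matrix $(\Trace(x_ay_b))_{a,b}$ has first row $(\mu,0,0,0)$ and lower-right $3\times 3$ block $\mu I_3$, so its determinant is $\mu^4$. Writing $y=xM$ with $M$ integral, this determinant equals $d(\mathcal{O})\det M$. Using your cited identity $d(\mathcal{O})=-\mu^2$, you get $[\mathcal{O}:L]=|\det M|=\mu^2=p^2$, which gives both linear independence and $L=P$. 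Completed this way, your argument is a genuine alternative to the paper's. The paper shows directly that the span of $C$ and $\alpha_1[\alpha_2,\alpha_3]$ is closed under left and right multiplication by the $\alpha_i$, via identities such as $\alpha_i[\alpha_i,\alpha_j]=[\alpha_i,\alpha_i\alpha_j]$ and $[\alpha_1,[\alpha_2,\alpha_3]]\in C$. That is purely algebraic and uses only that the ideal has rank $4$. Your route instead relies on $[\mathcal{O},\mathcal{O}]=P$, $[\mathcal{O}:P]=p^2$ and reduced discriminant $p$, but it produces the index $p^2$ explicitly.
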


\begin{proof}
Let $C$ be the $\mathbb{Z}$-lattice generated by the set $\{[\alpha_1,\alpha_2],[\alpha_1,\alpha_3], [\alpha_2,\alpha_3]\}$. We first claim that $C$ contains $[\gamma_1,\gamma_2]$, the commutator of $\gamma_1$ and $\gamma_2$, for all $\gamma_1,\gamma_2$ in $\mathcal{O}$. This follows by writing each $\gamma_i$ in the basis $\{1,\alpha_1,\alpha_2,\alpha_3\}$ and computing the commutator explicitly.

By the definition of the commutator ideal and two-sided ideals in noncommutative rings, the elements of $[\mathcal{O},\mathcal{O}]$ are of the form
\begin{equation*}
l_1 [x_1,y_1] r_1 + l_2 [x_2,y_2] r_2 + \cdots + l_n [x_n,y_n] r_n  
\end{equation*}
for $n$ any positive integer and $l_i,r_i,x_i$ and $y_i$ elements of $\mathcal{O}$ for $i \in \{1, \ldots, n\}$. Hence elements of $[\mathcal{O},\mathcal{O}]$ are of the form
\begin{equation}\label{eq:bigsum}
l_1 [\alpha_{1,i},\alpha_{1,j}]r_1 + l_2 [\alpha_{2,i},\alpha_{2,j}]r_2 + \cdots + l_n [\alpha_{n,i},\alpha_{n,j}]r_n  
\end{equation}
where as before $l_i$ and $r_i$ are elements of $\mathcal{O}$ and each $\alpha_{i,j} \in \{\alpha_1,\alpha_2,\alpha_3\}$.

Writing each $l_i$ and $r_i$ in the basis $\{1,\alpha_1,\alpha_2,\alpha_3\}$, we see that to show that \eqref{eq:bigsum} is a $\mathbb{Z}$-linear combination of elements of the set $\left\{[\alpha_1,\alpha_2],[\alpha_1,\alpha_3], [\alpha_2,\alpha_3], \alpha_1 [\alpha_2,\alpha_3]\right\}$, it now suffices to show that elements of the form $\alpha_i[\alpha_j,\alpha_k]$, $[\alpha_i,\alpha_j]\alpha_k$, and $\alpha_i[\alpha_j,\alpha_k]\alpha_{\ell}$ for any $i,j,k,\ell$ are all $\mathbb{Z}$-linear combinations of elements of the set $\left\{[\alpha_1,\alpha_2],[\alpha_1,\alpha_3], [\alpha_2,\alpha_3], \alpha_1 [\alpha_2,\alpha_3]\right\}$. We do this in several steps.

First, consider products of the form $\alpha_i[\alpha_j,\alpha_k]$. If $i =j$ or $i = k$, we obtain
\begin{gather*}
\alpha_i[\alpha_i,\alpha_j] =-\alpha_i[\alpha_j,\alpha_i]=  [\alpha_i,\alpha_i \alpha_j] \in C.
\end{gather*}
If $i,j,$ and $k$ are all distinct, we compute
\begin{equation*}
\alpha_i[\alpha_j,\alpha_k] + \alpha_j[\alpha_i,\alpha_k] = [\alpha_i, \alpha_i\alpha_k] + [\alpha_i, \alpha_j \alpha_k] \in C
\end{equation*}
and
\begin{equation*}
\alpha_i[\alpha_j,\alpha_k] + \alpha_k[\alpha_j,\alpha_i] = [\alpha_i\alpha_j,\alpha_k] + [\alpha_k \alpha_j, \alpha_i] \in C.
\end{equation*}
Thus the left ideal generated by the commutators in $\mathcal{O}$ has $\mathbb{Z}$-basis $\left\{[\alpha_1,\alpha_2],[\alpha_1,\alpha_3], [\alpha_2,\alpha_3], \alpha_1 [\alpha_2,\alpha_3]\right\}$.

Therefore to prove our claim, it now suffices to show that $[\alpha_i,\alpha_j]\alpha_k$ for any $i,j,k$ and $\alpha_1[\alpha_2,\alpha_3]\alpha_i$ for any $i$ are in the lattice generated by the elements $\left\{[\alpha_1,\alpha_2],[\alpha_1,\alpha_3], [\alpha_2,\alpha_3], \alpha_1 [\alpha_2,\alpha_3]\right\}$.

Again, if $i = j$ or $i = k$, we have
\begin{gather*}
[\alpha_i,\alpha_j] \alpha_i =-[\alpha_j,\alpha_i] \alpha_i=  [\alpha_i,\alpha_j \alpha_i] \in C,
\end{gather*}
and if $i,j,k$ are all distinct,
\begin{equation*}
[\alpha_j,\alpha_k]\alpha_i + [\alpha_i,\alpha_k]\alpha_j = [\alpha_i, \alpha_k\alpha_j] + [\alpha_j, \alpha_k \alpha_i] \in C
\end{equation*}
and
\begin{equation*}
[\alpha_j,\alpha_k]\alpha_i + [\alpha_j,\alpha_i]\alpha_k = [\alpha_j\alpha_k,\alpha_i] + [\alpha_j \alpha_i, \alpha_k] \in C.
\end{equation*}
In addition, we have, of course,
\begin{equation*}
\alpha_1[\alpha_2,\alpha_3] - [\alpha_2,\alpha_3] \alpha_1 = [\alpha_1, [\alpha_2,\alpha_3]] \in C,
\end{equation*}
from which it follows that each $[\alpha_i,\alpha_j]\alpha_k$, for any $i,j,k$, belongs to the lattice generated by the elements $\left\{[\alpha_1,\alpha_2],[\alpha_1,\alpha_3], [\alpha_2,\alpha_3], \alpha_1 [\alpha_2,\alpha_3]\right\}$.

Finally, considering elements of the form $\alpha_1[\alpha_2,\alpha_3]\alpha_i$, we note that by the proof above the product $[\alpha_2,\alpha_3]\alpha_i$ for each $i$ belongs to the lattice generated by the elements $\left\{[\alpha_1,\alpha_2],[\alpha_1,\alpha_3], [\alpha_2,\alpha_3], \alpha_1 [\alpha_2,\alpha_3]\right\}$, which is the left ideal generated by the commutators of $\mathcal{O}$. Hence $\alpha_1 [\alpha_2,\alpha_3]\alpha_i$ also belongs to this left ideal and therefore to our lattice. Therefore the set $\left\{[\alpha_1,\alpha_2],[\alpha_1,\alpha_3], [\alpha_2,\alpha_3], \alpha_1 [\alpha_2,\alpha_3]\right\}$ generates $[\mathcal{O},\mathcal{O}]$.

Now since $[\mathcal{O},\mathcal{O}]$ is free of rank $4$, a set of four elements that span it must be linearly independent.
\end{proof}

Of interest for us will be the following corollary:

\begin{corollary}\label{cor:betabasis}
Let $\mathcal{O}$ be a maximal order of $B_{p,\infty}$, $[\mathcal{O},\mathcal{O}]$ its commutator ideal, and $\{\beta_1,\beta_2,\beta_3\}$ be a basis for $\mathcal{O}^T$. Then the set 
\begin{equation}
\left\{\frac{\beta_1\beta_2 - \beta_2\beta_1}{4}, \frac{\beta_1 \beta_3 - \beta_3\beta_1}{4}, \frac{\beta_2\beta_3 - \beta_3\beta_2}{4}\right\}
\end{equation}
is a $\mathbb{Z}$-basis for the sublattice of elements of trace zero in $[\mathcal{O},\mathcal{O}]$ .
\end{corollary}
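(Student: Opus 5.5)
Using Lemma \ref{petitlemme}, I will choose $\alpha_i \in \mathcal{O}$ with $\tau(\alpha_i) = \beta_i$. The lemma then says that $\{1,\alpha_1,\alpha_2,\alpha_3\}$ is a basis of $\mathcal{O}$. Since $\beta_i = 2\alpha_i - \Trace(\alpha_i)$ and integers are central,
\begin{equation*}
\beta_i\beta_j - \beta_j\beta_i = 4(\alpha_i\alpha_j - \alpha_j\alpha_i),
\end{equation*}
so $\frac{[\beta_i,\beta_j]}{4} = [\alpha_i,\alpha_j]$. The claimed set is therefore exactly $\{[\alpha_1,\alpha_2],[\alpha_1,\alpha_3],[\alpha_2,\alpha_3]\}$. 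By Proposition \ref{prop:mainprop}, together with $\alpha_1[\alpha_2,\alpha_3]$ these elements form a $\mathbb{Z}$-basis of $[\mathcal{O},\mathcal{O}]$.

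Next I will show that the trace-zero sublattice of $[\mathcal{O},\mathcal{O}]$ is exactly the $\mathbb{Z}$-span of the three commutators. Commutators have trace zero, because $\Trace(xy) = \Trace(yx)$, so the span is contained in the trace-zero part. Conversely, write an element of $[\mathcal{O},\mathcal{O}]$ as
\begin{equation*}
x = a[\alpha_1,\alpha_2] + b[\alpha_1,\alpha_3] + c[\alpha_2,\alpha_3] + d\,\alpha_1[\alpha_2,\alpha_3]
\end{equation*}
with $a,b,c,d \in \mathbb{Z}$. Then $\Trace(x) = d\,\Trace(\alpha_1[\alpha_2,\alpha_3])$. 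It therefore suffices to show that $t := \Trace(\alpha_1[\alpha_2,\alpha_3]) \neq 0$, which forces $d = 0$.

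The main obstacle is this nonvanishing. Here is how I would prove it. The element $[\alpha_2,\alpha_3]$ is nonzero, since otherwise $\alpha_2$ and $\alpha_3$ commute and lie in a common quadratic subfield together with $1$, contradicting linear independence of $1,\alpha_2,\alpha_3$ over $\mathbb{Q}$. Being a trace-zero element, $[\alpha_2,\alpha_3]$ is orthogonal to $1$, and it is also orthogonal to the pure parts of $\alpha_2$ and $\alpha_3$.

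To see this, use $\Trace(x\overline{y})$ with $\overline{y} = -y$ for trace-zero $y$. Write $w = [\alpha_2,\alpha_3]$, which satisfies $\overline{w} = -w$. Then
\begin{equation*}
\Trace(\alpha_2 w) = \Trace(\alpha_2\alpha_2\alpha_3 - \alpha_2\alpha_3\alpha_2) = 0
\end{equation*}
by cyclicity of the trace, and similarly $\Trace(\alpha_3 w) = 0$.

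So $w$ is orthogonal to $1$, $\alpha_2$ and $\alpha_3$ under the positive definite form $\frac{1}{2}\Trace(x\overline{y})$. If $\Trace(\alpha_1 w) = 0$ as well, then $w$ would be orthogonal to a $\mathbb{Q}$-basis of $B_{p,\infty}$, hence $w = 0$, a contradiction. Therefore $t \neq 0$, the trace-zero sublattice is spanned by the three commutators, and these are linearly independent as part of a basis. This proves the corollary.
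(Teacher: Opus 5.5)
Your proof is correct. It follows the paper's outline: the lift via Lemma \ref{petitlemme}, the identity $[\beta_i,\beta_j]/4=[\alpha_i,\alpha_j]$, the basis from Proposition \ref{prop:mainprop}, and a trace argument forcing the coefficient of $\alpha_1[\alpha_2,\alpha_3]$ to vanish. The one real difference is how you show $\Trace(\alpha_1[\alpha_2,\alpha_3])\neq 0$.

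The paper gets this in one line from the ideal structure. Since $[\mathcal{O},\mathcal{O}]=P\supset p\mathcal{O}$, the ideal contains $p$, whose trace $2p$ is nonzero. The three commutators have trace zero, so the fourth basis element cannot.

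You instead show that $w=[\alpha_2,\alpha_3]$ is nonzero and orthogonal to $1,\alpha_2,\alpha_3$ for the positive definite form $\frac12\Trace(x\overline{y})$. Nondegeneracy then forces $\Trace(\alpha_1 w)\neq 0$. All of your steps check out: the centralizer argument for $w\neq 0$, $\overline{w}=-w$, and cyclicity giving $\Trace(\alpha_2 w)=\Trace(\alpha_3 w)=0$.

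Your argument is longer, but this step is self-contained. It uses neither maximality nor the identification $[\mathcal{O},\mathcal{O}]=P$. It actually proves a general fact about $B_{p,\infty}$: $\Trace(\alpha_1[\alpha_2,\alpha_3])\neq 0$ whenever $\{1,\alpha_1,\alpha_2,\alpha_3\}$ is a $\mathbb{Q}$-basis. In a standard basis this trace is a nonzero multiple of the determinant of the pure parts.

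That said, you still invoke Proposition \ref{prop:mainprop}, whose linear-independence step relies on $[\mathcal{O},\mathcal{O}]$ having rank $4$, which the paper justifies through $P$. So overall both proofs rest on the same imported facts. The paper's shortcut costs nothing extra, and your route confines the dependence to that proposition.
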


\begin{proof}
For each $i = 1,2,3$, let $\alpha_i \in \mathcal{O}$ be such that $\tau(\alpha_i) = \beta_i$. Since $\{\beta_1, \beta_2,\beta_3\}$ is a basis for $\mathcal{O}^T$, by Lemma~\ref{petitlemme}, the set $\{1,\alpha_1,\alpha_2,\alpha_3\}$ is a basis for $\mathcal{O}$.
A straightforward computation shows that 
\begin{equation}
[\alpha_i,\alpha_j] = \alpha_i\alpha_j - \alpha_j \alpha_i =\frac{ \beta_i\beta_j-\beta_j\beta_i}{4}
\end{equation}
for each $i,j \in \{1,2,3\}$.

The commutator ideal $[\mathcal{O},\mathcal{O}]$ contains elements of nonzero trace; this follows from the fact that $[\mathcal{O}, \mathcal{O}]  = P \supset p \mathcal{O}$. Since each $[\alpha_i,\alpha_j]$ has trace zero, the element $\alpha_1[\alpha_2,\alpha_3]$ must have nonzero trace. It follows then that if $\alpha \in [\mathcal{O}, \mathcal{O}]$ is of trace zero with
\begin{equation*}
\alpha  = a_0 [\alpha_1,\alpha_2] + a_1 [\alpha_1,\alpha_3] + a_2 [\alpha_2,\alpha_3] + a_3 \alpha_1[\alpha_2,\alpha_3]
\end{equation*}
we must have $a_3 = 0$. Hence the set $\{[\alpha_1, \alpha_2],[\alpha_1,\alpha_3],[\alpha_2,\alpha_3]\}$ spans the sublattice of elements of trace zero in $[\mathcal{O},\mathcal{O}]$, and since they are elements of a basis, they are linearly independent.
\end{proof}

We are now ready to prove the following result, which along with Proposition~\ref{half-love} completes the proof of Theorem~\ref{maintheorem}:

\begin{theorem}
Let $E$ be a supersingular elliptic curve defined over $\overline{\mathbb{F}}_p$. If $\alpha$ is an endomorphism of $E$ of degree $\ell p$ and trace $0$, then there are $\gamma_1$ and $\gamma_2$ in $\End(E)^T$ spanning a lattice of determinant $4\ell p$.
\end{theorem}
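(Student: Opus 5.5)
The plan is to realize $\alpha$ in the form $\alpha=\frac{1}{2}\gamma_1\overline{\gamma}_2-\frac14\Trace(\gamma_1\overline{\gamma}_2)$ with $\gamma_1,\gamma_2\in\End(E)^T$. As explained before Proposition~\ref{prop:mainprop}, the proof of Proposition~\ref{half-love} then gives the lattice spanned by $\gamma_1,\gamma_2$ determinant $4\Norm(\alpha)=4\ell p$.

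First, for trace-zero elements $\gamma_1,\gamma_2$ (every element of $\End(E)^T$ has trace zero, since $\Trace(\tau(x))=0$), we have $\overline{\gamma}_2=-\gamma_2$. Hence
$$\tfrac12\gamma_1\overline{\gamma}_2-\tfrac14\Trace(\gamma_1\overline{\gamma}_2) = -\tfrac12\gamma_1\gamma_2-\tfrac14\Trace(\gamma_2\gamma_1 +\dots),$$
and since $\Trace(\gamma_1\gamma_2)=\gamma_1\gamma_2+\gamma_2\gamma_1$ for pure quaternions, this simplifies to $-\frac14(\gamma_1\gamma_2-\gamma_2\gamma_1)$. So the target becomes: write $\alpha=\frac{\gamma_2\gamma_1-\gamma_1\gamma_2}{4}$ with $\gamma_i\in\mathcal{O}^T$, where $\mathcal{O}=\End(E)$.

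Next, since $\Norm(\alpha)=\ell p$ is divisible by $p$, $\alpha$ lies in the ideal $P=[\mathcal{O},\mathcal{O}]$, and it has trace zero. Fix a basis $\{\beta_1,\beta_2,\beta_3\}$ of $\mathcal{O}^T$. By Corollary~\ref{cor:betabasis}, there are integers $c_{12},c_{13},c_{23}$ with
$$\alpha=\sum_{i<j}c_{ij}\frac{\beta_i\beta_j-\beta_j\beta_i}{4}.$$

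The main step is to show that any integral combination of the three commutators is a single commutator $\frac{[\gamma_1,\gamma_2]}{4}$ with $\gamma_1,\gamma_2\in\mathcal{O}^T$. The bracket $(\gamma_1,\gamma_2)\mapsto[\gamma_1,\gamma_2]$ on $\mathcal{O}^T\cong\mathbb{Z}^3$ is bilinear and alternating. Writing $\gamma_1=\sum x_i\beta_i$ and $\gamma_2=\sum y_i\beta_i$, we get $[\gamma_1,\gamma_2]=\sum_{i<j}(x_iy_j-x_jy_i)[\beta_i,\beta_j]$, so its coordinates are the cross product $x\times y$. It therefore suffices that every integer vector $c=(c_{23},-c_{13},c_{12})$, suitably arranged, is the cross product of two integer vectors. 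This is elementary. Let $d=\gcd$ of the entries and $c=dc'$ with $c'$ primitive. The lattice $\{v\in\mathbb{Z}^3: v\cdot c'=0\}$ has rank $2$, and a basis $x,y'$ of it satisfies $x\times y'=\pm c'$, since its covolume equals $|c'|$ for primitive $c'$. Then take $y=\pm d\,y'$. Finally, if $\alpha\neq 0$ (true, since $\ell\ge1$), $\gamma_1,\gamma_2$ are automatically independent, as already noted in the paper.

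I expect the only real obstacle to be bookkeeping: matching signs and the ordering of $\gamma_1,\gamma_2$ so that the right-hand side equals $\alpha$ exactly rather than $-\alpha$. This is fixed by swapping $\gamma_1$ and $\gamma_2$. One must also justify the cross-product surjectivity claim. If the covolume argument feels loose, I would instead extend the primitive vector $c'$ to a unimodular matrix and read off $x,y$ from the adjugate.
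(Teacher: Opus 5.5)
Your proposal is correct and follows the paper's route: place $\alpha$ in the trace-zero part of $[\mathcal{O},\mathcal{O}]$, expand it in the basis $\frac{\beta_i\beta_j-\beta_j\beta_i}{4}$ of Corollary~\ref{cor:betabasis}, and realize the coefficient vector as the $2\times 2$ minors (a cross product) of two integer coordinate vectors. The differences are cosmetic. The paper solves the minor system explicitly using $\gcd(a_1,a_2)$ and a linear Diophantine equation, where you use a primitive-vector or unimodular-completion argument. Your identity $\frac12\gamma_1\overline{\gamma}_2-\frac14\Trace(\gamma_1\overline{\gamma}_2)=\frac{\gamma_2\gamma_1-\gamma_1\gamma_2}{4}$ for trace-zero $\gamma_i$ is exactly the paper's ``tedious but elementary computation''. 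Your intermediate display has a sign slip (the term should be $+\frac14\Trace(\gamma_1\gamma_2)$), but the final expression is right.
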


\begin{proof}
Let $\mathcal{O}$ be a maximal order of $B_{p,\infty}$ which is isomorphic to $\End(E)$. Since $\alpha$ has norm divisible by $p$ and trace $0$, we have that $\alpha$ belongs to the $\mathbb{Z}$-lattice of elements of trace zero in $[\mathcal{O},\mathcal{O}]$. Hence by Corollary \ref{cor:betabasis}, there are integers $a_1, a_2, a_3$ such that
\begin{equation*}
\alpha = a_1 \frac{\beta_1\beta_2-\beta_2\beta_1}{4} +a_2 \frac{\beta_1\beta_3-\beta_3\beta_1}{4} + a_3 \frac{\beta_2\beta_3-\beta_3\beta_2}{4}. 
\end{equation*}

We claim that there are integers $c_{11},c_{12},c_{13},c_{21},c_{22},c_{23}$ such that
\begin{equation*}
\left\{
\begin{matrix}
a_1 = c_{12} c_{21} - c_{11}c_{22}   \\
a_2 = c_{13}c_{21}- c_{11}c_{23} \\
a_3 = c_{13} c_{22} - c_{12}c_{23}. \cr
\end{matrix}
\right. 
\end{equation*}
Indeed if $a_1 = a_2 = 0$, we have the integer solution $c_{11} = c_{13} = c_{21} = c_{22} = 0$, $c_{12} = a_3$ and $c_{23} = -1$. Otherwise we have the solution $c_{11} = \gcd(a_1,a_2)$, $c_{21} = 0$, $c_{22} = - \frac{a_1}{c_{11}}$, $c_{23} = - \frac{a_2}{c_{11}}$ and $c_{12}, c_{13}$ solutions of the diophantine equation
\begin{equation*}
a_2 c_{12} - a_1 c_{13} = c_{11} a_3,
\end{equation*}
which exist since $\gcd(a_1,a_2)$ divides $c_{11} a_3$.

Now let $\{ \beta_1, \beta_2, \beta_3 \}$ be a $\mathbb{Z}$-basis for $\End(E)^T$ and set $\gamma_1 = c_{11} \beta_1 + c_{12} \beta_2 + c_{13} \beta_3$ and $\gamma_2 = c_{21} \beta_1 + c_{22} \beta_2 + c_{23} \beta_3$, so that $\gamma_1,\gamma_2 \in \End(E)^T$. A tedious but elementary computation shows that 
\begin{equation*}
\alpha = \frac{1}{2} \gamma_1 \overline{\gamma}_2 - \frac{1}{4} \Trace( \gamma_1 \overline{\gamma}_2 ).
\end{equation*}
As remarked, by the proof of Proposition \ref{half-love}, the determinant of the lattice $\Lambda$ spanned by $\gamma_1$ and $\gamma_2$ is then $4 \Norm(\alpha) = 4 \ell p$. Since this is nonzero, the set $\{\gamma_1,\gamma_2\}$ is linearly independent and $\Lambda$ is of rank $2$.
\end{proof}

\section{Counter-examples when the trace is not zero}\label{counterexample}

One may rightly wonder if in Theorem \ref{maintheorem} the condition that the element $\alpha \in \End(E)$ have trace zero is necessary. We now give several examples that show that it is not guaranteed that there is a rank-2 sublattice of $\End(E)^T$ with determinant $4 \ell p$ if $\End(E)$ contains an element of degree $\ell p$ but whose trace is nonzero. 

\subsection{Counter-example when $\ell = p$}\label{subsec:lisp}

Consider the elliptic curve $E$ defined over $\mathbb{F}_{11}$ with $j$-invariant $j=0$; this curve is supersingular.
In this case, $\End(E)$ has an endomorphism of norm $11^2=121$ but $\End(E)^T$ does not admit a rank-2 sublattice of determinant $4\times 121$, as we now demonstrate.

Let $B_{11,\infty}$ be the quaternion algebra over $\mathbb Q$ ramified exactly at $11$ and $\infty$ and
let $i,j,k$ be elements of $B_{11,\infty}$ such that $i^2=-3$, $j^2=-11$ and $k=ij$.
Let $\mathcal O$ be any maximal order of $B_{11,\infty}$ such that $\End(E)\simeq \mathcal O$.

Then there is a choice of $\mathcal{O}$ such that its Gross lattice ${\mathcal O}^T$ has a successive minimal basis given by $\{i,\frac{i+3j-k}{3},\frac{-i-2k}{3}\}$
and the Gram matrix of this basis is given by (see Proposition 3.2.1 of \cite{HKTV}):
$$
\begin{pmatrix}
3 & 1 &  1 \\
1 & 15 & -7\\
1 & -7 & 15\cr
\end{pmatrix}.
$$
For any other choice $\mathcal{O}'$ of a maximal ideal of $B_{p,\infty}$ isomorphic to $\mathcal{O}$, the Gram matrix of a successive minimal basis remains the same.

Now consider the following element of $B_{p,\infty}$:
$$\alpha:=\frac{11}{2} + \frac{11}{2}i;$$
we have that $\alpha$ has reduced trace $\Trace(\alpha)=11$ and reduced norm $\Norm(\alpha)=11 \times 11$. There is a choice of maximal order $\mathcal{O}$ of $B_{11,\infty}$ such that $\End(E)\simeq \mathcal O$ and $\alpha \in \mathcal{O}$. We show that $\mathcal{O}^T$ does not contain a sublattice of rank 2 and determinant $4 \times 121$.

Indeed, let $\gamma_1$ and $\gamma_2$ be two elements of $\mathcal{O}^T$ and $\Lambda$ be the lattice that they generate. 
If as before we write $\{\beta_1,\beta_2,\beta_3\}$ for a basis for $\mathcal{O}^T$, then $\gamma_1 = c_{11} \beta_1 + c_{12} \beta_2 + c_{13} \beta_3$ and $\gamma_2 = c_{21} \beta_1 + c_{22} \beta_2 + c_{23} \beta_3$ for some integers $c_{ij}$. Then we have
\begin{equation*}
\Norm(\gamma_i) = 3 c_{i1}^2 + 2 c_{i1} c_{i2} + 2 c_{i1} c_{i3} +15 c_{i2}^2 -14 c_{i2} c_{i3} + 15 c_{i3}^2
\end{equation*}
as well as
\begin{equation*}
\Trace(\gamma_1 \overline{\gamma}_2) = 6 c_{11}c_{21} + 2 c_{11} c_{22} + 2 c_{11} c_{23} + 2 c_{12} c_{21} + 30 c_{12} c_{22} -14 c_{12} c_{23} +2 c_{13} c_{21} - 14 c_{13} c_{22} +30 c_{13} c_{23}.
\end{equation*}

The determinant of $\Lambda$ is then $\Norm(\gamma_1)\Norm(\gamma_2) - \frac{1}{4} \Trace(\gamma_1\overline{\gamma}_2)^2$, which is a homogenous polynomial of degree 4 with 6 variables.
After dividing by $44$ and performing the following change of variables:
\begin{equation}\label{eq:variables}
\left\{
\begin{matrix}
x=c_{11}c_{22}-c_{12}c_{21}\\
y=c_{11}c_{23}-c_{13}c_{21}\\
z= c_{12}c_{23}-c_{13}c_{22}\cr
\end{matrix}
\right.
\end{equation}
we obtain the quadratic form
$$
Q=x^2+y^2+4z^2-xy+yz-xz.
$$
It remains then to prove that this quadratic form does not represent $11$.
Indeed, $Q$ has the following diagonal form:
$$
Q=\left( x-\frac{y+z}{2}\right)^2 + \frac{3}{4}\left(y+\frac{z}{3}\right)^2 + \frac{11}{3}z^2,
$$
so that
$$12Q=3(2x-y-z)^2+(3y+z)^2+44z^2.$$

The equation $3(2x-y-z)^2+(3y+z)^2+44z^2=132$ implies immediately that $44z^2\leq 132$ which gives $z^2\leq 3$ and thus $z\in\{-1,0,1\}$.

If $z=0$ then we obtain that $(2x-y)^2+3y^2=44$. Then $3y^2 \leq 44$,
which gives $3y^2\in\{0,3,12,27\}$ and then
$(2x-y)^2 \in \{44, 41, 32,17\}$, but none of these integers is a perfect square.
Thus we have no integer solution in the case $z=0$.

If $z=\pm 1$, we proceed in the same way and we conclude again that there is no integer solution, and therefore no sublattice of rank $2$ of $\mathcal{O}^T$ with determinant $4 \times 121$.

\subsection{Counter-example when $\ell \neq p$ and $\ell$ is prime}

Consider the supersingular elliptic curve $E$ defined over $\mathbb{F}_{31}$ with $j$-invariant $j = 4$. Let $B_{31,\infty}$ be the quaternion algebra over $\mathbb{Q}$ ramified at $31$ and $\infty$, let $i,j,k$ be elements of $B_{31,\infty}$ such that $i^2=-1$, $j^2=-31$ and $k = ij$, and $\mathcal{O}$ be a maximal order of $B_{31,\infty}$ such that $\End(E)\simeq \mathcal O$ and such that $\mathcal{O}$ contains the element
\begin{equation*}
\alpha = \frac{31}{2} - \frac{31}{3} i - \frac{7}{6}j - \frac{2}{3} k.
\end{equation*}
We have that $\Trace(\alpha) = 31$ and $\Norm(\alpha) = 13 \times 31$.

Similarly to the counter-example above, we can compute the determinant form of a rank-$2$ sublattice of the Gross lattice $\mathcal{O}^T$. Using the notation of Subsection \ref{subsec:lisp} and the change of variables of Equation \eqref{eq:variables}, this form is equal to $124Q$ where $Q$ is 
\begin{equation*}
Q = x^2 + 2y^2 +5z^2-xy-xz+2yz.
\end{equation*}
We can see that the form $Q$ does not represent $13$ using its diagonal form
\begin{equation*}
Q = \left(x - \frac{y+z}{2} \right)^2 +\frac{7}{4} \left(y + \frac{3}{7}z\right)^2 + \frac{31}{7}z^2
\end{equation*}
in a manner entirely analogous to that employed in Subsection \ref{subsec:lisp}.

\subsection{Counter-example when $\ell \neq p$ and $\ell$ is composite}
 
This time let $E$ be the supersingular elliptic curve defined over $\mathbb{F}_{19}$ with $j$-invariant $j = 7$, let $B_{19,\infty}$ be the quaternion algebra over $\mathbb{Q}$ ramified at $19$ and $\infty$, let $i,j,k$ be elements of $B_{19,\infty}$ such that $i^2=-1$, $j^2=-19$ and $k = ij$. Furthermore, choose $\mathcal{O}$ a maximal order of $B_{19,\infty}$ such that $\End(E)\simeq \mathcal O$ and $\mathcal{O}$ contains the element
 \begin{equation*}
 \alpha = \frac{19}{2} - \frac{19}{2}i - \frac{1}{2}j - \frac{1}{2}k
 \end{equation*}
which is of reduced norm $10 \times 19$ and reduced trace $19$. 

Once more we can show that the Gross lattice $\mathcal{O}^T$ does not contain a sublattice of rank $2$ and determinant $4 \times 10 \times 19$. This time, the determinant form of a rank-$2$ sublattice of $\mathcal{O}^T$ is equal to $76Q$ where 
\begin{equation*}
Q = x^2 + 2y^2 + 3z^2 -xy - xz + yz
\end{equation*}
with diagonal form
\begin{equation*}
Q = \left(x-\frac{y+z}{2}\right)^2 + \frac{7}{4}\left(y + \frac{z}{7}\right)^2 + \frac{19}{7}z^2.
\end{equation*}
As this form does not represent $10$, there is no rank-$2$ sublattice of determinant $4 \times 10 \times 19$ in $\mathcal{O}^T$.

\bibliographystyle{alpha}
\bibliography{biblio}
\end{document}